\documentclass[12pt,oneside,reqno]{amsart}
\usepackage{amsmath}
\usepackage{amsfonts}
\usepackage{amssymb}
\usepackage{graphicx}
\usepackage{enumerate}
\usepackage[mathscr]{eucal}

\theoremstyle{plain}

\newtheorem*{corollary}{Corollary}

\newtheorem{fact}{Fact}

\newtheorem*{lemma}{Lemma}

\newtheorem{proposition}{Proposition}
\theoremstyle{remark}
\newtheorem*{remark}{Remark}

\theoremstyle{plain}
\newtheorem*{observation}{Observation}
\theoremstyle{definition}

\usepackage[height=210mm,width=158mm]{geometry}                       
\begin{document}
\title[
Log-concavity of rows of 
 Pascal type triangles]{
Log-concavity of rows of 
 Pascal type triangles}

\author{Stephan Foldes}
\address{Stephan Foldes\newline%
\indent University of Miskolc,  \newline%
\indent 3515 Miskolc-Egyetemv\'aros, Hungary }
\email {foldes.istvan@uni-miskolc.hu}%

\author{L\'aszl\'o Major}
\address{L\'aszl\'o Major\newline%
\indent University of Turku,  \newline%
\indent Faculty of Science and Engineering    \newline%
\indent  20500 Turku, Vesilinnantie 5, Finland }
\email{laszlo.major@utu.fi}%

\hspace{-4mm} \date{6 March 2019}
\subjclass[2010]{Primary 05A15, 40A05; Secondary  	11B83} %
\keywords{Pascal triangle, Delannoy triangle, log-concavity, convolution, n-fold convolution, triangular array, convolution array}

\begin{abstract}
Menon's proof of the preservation of log-concavity of sequences under convolution becomes simpler when adapted to 2-sided infinite sequences. Under assumption of log-concavity of two 2-sided infinite sequences, the existence of the convolution is characterised by a convergence criterion. Preservation of log-concavity under convolution yields another method of establishing the log-concavity of rows of certain Pascal type triangles. This includes in particular the log-concavity of rows of a weighted Delannoy
triangle. The method is also compared with known techniques of proving log-concavity.   
 \end{abstract}

\maketitle
\section{Introduction}
For integer indexed 2-sided infinite sequences $(a_{n})_{n\in \mathbb{Z}}$ of non-negative real numbers we consider the \textit{log-concavity} property without internal zeros, which means that for all integers $n$ and for all positive integers $p,q$ we have  $a_n a_{n+p+q}\leq a_{n+p} a_{n+q}$. Unimodality of combinatorial sequences can be proved by establishing the stronger property of log-concavity. The literature often refers back to the seminal papers by Stanley \cite{sta} and Brenti \cite{bre}.

In recent years there has been significant progress in the study of log-concavity and unimodality properties of sequences appearing as rows and other transversals or rays in combinatorial (arithmetic) triangles generalizing the triangle of Pascal. The articles of Belbachir and Szalay \cite{besz}, of Su and Wang \cite{suw} and of Yaming \cite{yam} deal with the classical Pascal's triangle, considering several types of transversals or rays. The approach to generalizing Pascal's triangle which consists of considering coefficient sequences of powers $p^n$ of   a given polynomial $p$ as successive rows of an array - originating with Euler - is central in the articles of Su and Zhang \cite{suz} and of Ahmia and Belbachir \cite{ab}. Both of these articles study the log-concavity of the sequences of the coefficients of the polynomials $p^n$. If the polynomial $p$ involved has several variables, then instead of Pascal type triangles, Pascal type pyramids arise in which the various transversals and rays have been studied by Belbachir, Bencherif and Szalay \cite{bbsz} and by Su and Wang \cite{suwa}. Pascal's triangle can also be generalized using recursion formulas where each entry is computed as a weighted sum of some of the entries above it in the triangle. Within this framework the log-concavity property in weighted Delannoy triangles has been studied by Mu and Zheng \cite{muz}. The articles of Lin and Zeng \cite{lin} and of Zhu \cite{zhu}  deal with variable weights in the recursion formulas.

In Section \ref{ptta} of this note we return to an older construction of convolution triangles, where the main focus was not log-concavity, introduced in the 70's  (see e.g. Hoggat and Bicknell \cite{hog1}). 

The following Facts, which do not appear to have been stated explicitly in the related literature, are easily verified. Also Proposition \ref{f5} and the Observation in the next section seem to be new, but their proofs are straightforward and will thus be omitted.

\begin{fact}
If $(a_n)_{n\in \mathbb{Z}}$ and $(b_n)_{n\in \mathbb{Z}}$ are log-concave then the term-wise product $(a_nb_n)$
is also log-concave.
\end{fact}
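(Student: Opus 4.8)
The plan is to verify the defining inequality directly for the term-wise product, which I denote $c_n := a_n b_n$. First I would fix an arbitrary integer $n$ together with arbitrary positive integers $p$ and $q$, and record the two hypotheses supplied by the log-concavity of $(a_n)$ and $(b_n)$:
\[
a_n\, a_{n+p+q} \leq a_{n+p}\, a_{n+q}, \qquad b_n\, b_{n+p+q} \leq b_{n+p}\, b_{n+q}.
\]
Because the sequences consist of non-negative reals, every quantity appearing on either side of both inequalities is itself $\geq 0$.

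The key step is then to multiply these two inequalities. I would invoke the elementary fact that for non-negative reals $0 \le x \le X$ and $0 \le y \le Y$ one has $xy \le XY$, applying it with $x = a_n a_{n+p+q}$, $X = a_{n+p} a_{n+q}$, $y = b_n b_{n+p+q}$, and $Y = b_{n+p} b_{n+q}$. This yields
\[
\bigl(a_n\, a_{n+p+q}\bigr)\bigl(b_n\, b_{n+p+q}\bigr) \leq \bigl(a_{n+p}\, a_{n+q}\bigr)\bigl(b_{n+p}\, b_{n+q}\bigr).
\]
A rearrangement of the factors, using only commutativity of multiplication in $\mathbb{R}$, rewrites this as
\[
\bigl(a_n b_n\bigr)\bigl(a_{n+p+q} b_{n+p+q}\bigr) \leq \bigl(a_{n+p} b_{n+p}\bigr)\bigl(a_{n+q} b_{n+q}\bigr),
\]
which is precisely $c_n\, c_{n+p+q} \leq c_{n+p}\, c_{n+q}$. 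Since $n$, $p$, and $q$ were arbitrary, this establishes the log-concavity of $(c_n) = (a_n b_n)$.

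The only point requiring any care is the preservation of the inequality direction under multiplication, and this is exactly where the non-negativity assumption on both sequences is essential; without it, multiplying the two inequalities could reverse the sense and the conclusion would fail. I therefore expect no genuine obstacle here: the statement is a one-line consequence of the definition, and the substance of the paper lies instead in the forthcoming convolution results rather than in this elementary term-wise closure property.
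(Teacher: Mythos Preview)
Your argument is correct: multiplying the two defining inequalities, which is legitimate because every factor is non-negative, and then regrouping gives exactly the log-concavity inequality for $(a_n b_n)$. The paper itself states this Fact without proof, treating it as immediate from the definition; your write-up supplies precisely the one-line verification the authors left implicit, so there is nothing to compare and no gap to address.
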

As well known, every log-concave sequence is unimodal. Indeed the property of log-concavity itself can be characterized by the unimodality of certain derived sequences:
\begin{fact}\label{f3}
The sequence $(a_n)_{n\in \mathbb{Z}}$ is  log-concave if and only if for  each  fixed $p$ the skew self-product $(a_n a_{p-n})_{n\in \mathbb{Z}}$  is unimodal.
\end{fact}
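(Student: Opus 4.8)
The plan is to exploit the reflection symmetry of the skew self-product and to translate unimodality into a family of two-term inequalities that coincide with the defining inequality of log-concavity. Fix $p$ and abbreviate $c_n = a_n a_{p-n}$. The first observation is that $c_n = c_{p-n}$, so $(c_n)$ is symmetric about the half-integer $p/2$. For a sequence with this symmetry, unimodality is equivalent to the single requirement that $(c_n)$ be non-decreasing on the indices left of the centre, i.e. that $c_n \le c_{n+1}$ whenever $n+1 \le p-n$; the non-increasing right half is then forced by symmetry. I will isolate this equivalence as the bridge between the two notions and use it in both directions.

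For the forward implication, assume log-concavity and fix $p$. To show that $(c_n)$ is non-decreasing left of the centre, take any $n$ with $2n+1 \le p$ and write $p-n = (n+1)+(p-2n-1)$, where $p-2n-1 \ge 0$. When $p-2n-1 \ge 1$, the defining inequality applied at base point $n$ with the two positive increments $1$ and $p-2n-1$ gives exactly $a_n a_{p-n} \le a_{n+1} a_{p-n-1}$, that is $c_n \le c_{n+1}$; the borderline case $p-2n-1 = 0$ gives $c_n = c_{n+1}$ outright. By the bridge, $(c_n)$ is unimodal.

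For the converse, assume every skew self-product is unimodal and fix a target inequality $a_n a_{n+r+s} \le a_{n+r} a_{n+s}$ with $r,s \ge 1$; by the symmetry of this inequality in $r$ and $s$ we may assume $r \le s$. Choosing $p = 2n+r+s$ turns the left side into $c_n$ and the right side into $c_{n+r}$, since $p-n = n+r+s$ and $p-(n+r) = n+s$. Because $r \le s$ we have $n \le n+r \le p/2$, so both indices lie on the non-decreasing half of $(c_n)$, and unimodality yields $c_n \le c_{n+r}$, which is the desired inequality. Note that the single inequality in the definition already excludes internal zeros, so no separate treatment of vanishing terms is needed.

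I expect the main obstacle to be the careful justification of the bridge itself: proving that, for a sequence symmetric about $p/2$, unimodality is genuinely equivalent to monotonicity up to the centre. This requires pinning the peak (or plateau) of a symmetric unimodal sequence at the centre and handling the parity of $p$ separately -- the flat top $c_n = c_{n+1}$ for odd $p$ versus the single central maximum for even $p$ -- rather than any of the algebraic substitutions, which become immediate once the correct value of $p$ has been chosen.
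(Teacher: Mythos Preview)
The paper states this result as a Fact without proof, so there is no argument in the paper to compare against; only the Remark following the Fact records that the modus of $(a_na_{p-n})$ lies at $\lfloor p/2\rfloor$ and $\lceil p/2\rceil$, which is exactly the content of your ``bridge''.

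Your argument is correct. The forward direction is an immediate specialisation of the defining inequality, and in the converse your choice $p=2n+r+s$ reduces the target inequality to $c_n\le c_{n+r}$ with $n<n+r\le p/2$. The only point you flag as delicate---that a symmetric unimodal sequence must be non-decreasing up to its centre---is in fact short: if the peak of $(c_n)$ occurred at some $m<\lfloor p/2\rfloor$, then $(c_n)$ would be non-increasing on $[m,p-m]$, while symmetry forces $c_m=c_{p-m}$, so $(c_n)$ is constant on that interval and the plateau already contains $\lfloor p/2\rfloor$; the case $m>\lceil p/2\rceil$ is symmetric. With this one sentence supplied, your proof is complete and matches the Remark in the paper.
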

\begin{remark}
The modus of  $(a_n a_{p-n})$ will be reached at $\lfloor p/2 \rfloor$ (also at $\lceil p/2\rceil$), the floor and ceiling coinciding for even $p$.
\end{remark}
\begin{fact}\label{f4}The following are equivalent for any non-null log-concave sequence $(a_n)_{n\in \mathbb{Z}}$:
\begin{enumerate}[(i)]
\item the sequence has a finite sum,
\item the terms of the sequence tend to $0$ as $n$ tends to infinity or minus infinity,
\item the indices where the sequence has maximal values form a non-empty finite interval.
\end{enumerate}
\end{fact}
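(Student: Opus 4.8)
The plan is to establish the cyclic chain $(i)\Rightarrow(ii)\Rightarrow(iii)\Rightarrow(i)$. The first implication needs no log-concavity at all: if $\sum_{n\in\mathbb{Z}}a_n$ converges then, since every term is non-negative, both tails $\sum_{n\ge N}a_n$ and $\sum_{n\le -N}a_n$ tend to $0$ as $N\to\infty$, which forces $a_n\to 0$ as $n\to\pm\infty$. So $(i)\Rightarrow(ii)$ is immediate.

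For $(ii)\Rightarrow(iii)$ I would set $M=\sup_n a_n$; non-nullity gives $M>0$, and $(ii)$ supplies an index $N$ with $a_n<M/2$ whenever $|n|>N$. Hence the supremum is actually the maximum of the finitely many values $a_{-N},\dots,a_{N}$, so it is attained, and every maximizing index lies in $[-N,N]$; thus the set of maximizers is non-empty and finite. That this set is moreover an \emph{interval} is exactly the unimodality of log-concave sequences recorded above.

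The substance of the statement is $(iii)\Rightarrow(i)$, and this is where I expect the real work. The key tool is the special case $p=q=1$ of the defining inequality, namely $a_na_{n+2}\le a_{n+1}^2$, which says the ratio $a_{n+1}/a_n$ is non-increasing along the support. Writing the maximizing interval as $[c,d]$, the fact that $d$ is its right endpoint forces $a_{d+1}<a_d=M$, so $\rho:=a_{d+1}/a_d<1$; monotonicity of the ratios then yields $a_{d+k}\le M\rho^{k}$ for all $k\ge 0$ by induction, and the mirror-image argument controls the left tail. Summing the two geometric majorants gives $\sum_{n\in\mathbb{Z}}a_n<\infty$.

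I expect the main obstacle to be the bookkeeping around the support and the ``without internal zeros'' hypothesis: I must check that the inductive ratio estimate survives the moment the sequence possibly reaches $0$. Here the absence of internal zeros is what saves the argument, since once a log-concave sequence hits $0$ on the right it must stay $0$ thereafter; the bound $a_{d+k}\le M\rho^{k}$ then persists with its left-hand side simply vanishing past the support, and the geometric series still converges. Non-nullity is genuinely needed throughout, as the all-zero sequence satisfies $(i)$ and $(ii)$ yet violates $(iii)$.
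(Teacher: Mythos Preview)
The paper does not actually prove this Fact; it is stated in the introductory section as a known preliminary result, with no argument supplied. So there is no ``paper's own proof'' to compare against.

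Your argument is correct and is the natural one. The chain $(i)\Rightarrow(ii)\Rightarrow(iii)\Rightarrow(i)$ is set up properly; the first two implications are routine, and you correctly invoke the unimodality of log-concave sequences (stated just before Fact~\ref{f3} in the paper) to see that the set of maximizers is an interval. For $(iii)\Rightarrow(i)$ the geometric-majorant argument via the non-increasing ratio $a_{n+1}/a_n$ is exactly the right idea, and you handle the edge case $a_{d+1}=0$ correctly: the ``no internal zeros'' hypothesis forces the sequence to vanish identically to the right of $d$ once a zero appears, so the bound $a_{d+k}\le M\rho^{k}$ holds trivially there. One small point worth making explicit in a polished write-up is that the inductive step $a_{d+k+1}\le\rho\,a_{d+k}$ requires $a_{d+k}>0$ to invoke the ratio inequality $a_{d+k+1}/a_{d+k}\le a_{d+k}/a_{d+k-1}$; you cover this implicitly by splitting into the cases ``still in the support'' versus ``already zero'', but spelling out that the induction hypothesis includes the ratio bound (not just the size bound) would tighten the exposition.
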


The convolution of two sequences  $a=(a_n)_{n\in \mathbb{Z}}$ and $b=(b_n)_{n\in \mathbb{Z}}$ of non-negative numbers, denoted   $a*b$, 
is the sequence  $(s_n)_{n\in \mathbb{Z}}$  with possibly infinite terms $s_n \in [0,\infty]$, 
where   $s_n$   is the possibly infinite (divergent) sum $$\sum_{m\in \mathbb{Z}}a_m\cdot b_{m-n}=\sum_{m\in \mathbb{Z}}a_{n-m}\cdot b_m $$
\begin{proposition}\label{f5}
Every term of the convolution of log-concave sequences $(a_n)_{n\in \mathbb{Z}}$ and $(b_n)_{n\in \mathbb{Z}}$ is finite if and only if for all $p$
the terms of each skew product $(a_nb_{p-n})$ tend to $0$ as $n$ tends to infinity or minus infinity.
\end{proposition}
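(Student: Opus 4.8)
The plan is to fix an integer $p$ and reduce both sides of the claimed equivalence to a single property of the one skew product sequence $c_n := a_n b_{p-n}$. The $p$-th term of the convolution is exactly $\sum_{n\in\mathbb{Z}} c_n$, a sum of non-negative reals, hence well-defined in $[0,+\infty]$, and the finiteness of every convolution term means $\sum_n c_n < \infty$ for each $p$. Since both the left-hand condition (finiteness of every term) and the right-hand condition (vanishing of each skew product at $\pm\infty$) are quantified over all $p$, it suffices to prove, for each fixed $p$ separately, that $\sum_n c_n < \infty$ if and only if $c_n \to 0$ as $n \to +\infty$ and as $n \to -\infty$.

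The key preliminary step, and the only one that requires genuine verification, is that $(c_n)_{n\in\mathbb{Z}}$ is itself log-concave. First I would observe that the reflected sequence $d_n := b_{p-n}$ is log-concave whenever $(b_m)$ is: writing the target inequality $d_n d_{n+s+t} \le d_{n+s} d_{n+t}$ in terms of $b$ and reindexing by $m = p-n-s-t$ turns it into the defining inequality $b_m b_{m+s+t} \le b_{m+s} b_{m+t}$ for $(b_m)$, so log-concavity is preserved under reflection. Then, since $(a_n)$ is log-concave and $(d_n)=(b_{p-n})_n$ is log-concave, Fact 1 gives that the term-wise product $(a_n b_{p-n}) = (c_n)$ is log-concave.

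With log-concavity of $(c_n)$ established, the per-$p$ equivalence follows directly from Fact \ref{f4}. If $(c_n)$ is the null sequence the claim is trivial, since its sum is $0$ and all its terms vanish; otherwise $(c_n)$ is a non-null log-concave sequence, and Fact \ref{f4} gives the equivalence of its finite summability (condition (i)) with the vanishing of its terms at $\pm\infty$ (condition (ii)). This is precisely the statement needed for the fixed $p$, and conjoining over all $p$ yields the proposition.

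I expect the substance of the argument to lie entirely in the log-concavity of the skew product, after which the result is an application of Fact \ref{f4}. It is worth noting where the hypotheses are truly used: the implication $\sum_n c_n < \infty \Rightarrow c_n \to 0$ holds for any non-negative sequence and needs nothing, whereas the converse implication is false in general (terms tending to $0$ need not be summable) and becomes true only because $(c_n)$ is log-concave. Thus log-concavity enters through exactly one direction, packaged already in Fact \ref{f4}.
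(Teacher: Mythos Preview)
Your argument is correct and is precisely the route the paper intends: the proposition is stated immediately after Fact~1 and Fact~\ref{f4} because it follows from them exactly as you describe, via the log-concavity of the skew product $(a_n b_{p-n})$ and the equivalence (i)$\Leftrightarrow$(ii) of Fact~\ref{f4}. The paper offers no separate proof, so there is nothing further to compare.
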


\section{Preservation of log-concavity under convolution}

Certain properties of a sequence $(a_{n})_{n\in \mathbb{Z}}$, not necessarily log-concave, are conveniently discussed in terms of the associated formal Laurent series $\Sigma  a_nX^n$, the product of series $(\Sigma  a_nX^n)(\Sigma b_nX^n)$ corresponding to the \textit{convolution sequence} $(a_n)\ast (b_n)$ if it exists. For repeated convolutions of the sequence $(a_{n})_{n\in \mathbb{Z}}$ with itself ($k$-fold convolution with $k$ factors) the exponential notation $(a_n)^{*k}$ is used, where $(a_n)^{*1}=(a_n)$ and $(a_n)^{*0}=(\ldots,0,0,1,0,0,\ldots)$. An important class of Laurent series is that of  power series, where $a_n=0$ for all negative indices.

It is known that the coefficient sequence of the product of two formal power series with log-concave non-negative coefficient sequences is also log-concave. This result was clearly stated and proved by Menon \cite{men}, and can also be deduced from the theory of total positivity of Karlin \cite{kar} developed in a larger context. In fact Karlin already  deals with 2-sided infinite coefficient sequences, in other words, formal Laurent series. The product of two formal Laurent series with non-negative coefficients may not exist, but if it does, then log-concavity of the factors implies log-concavity of the product. This fact is somewhat simpler to verify for Laurent series than for power series (as Menon did), because Laurent series can be multiplied with any positive or negative power of the indeterminate $X$, a reversible operation which obviously preserves log-concavity of the coefficient sequence. Thus we need to verify only that in the product
$$(\Sigma  a_nX^n)(\Sigma b_nX^n)=\Sigma c_nX^n$$
we have
\begin{equation}\label{eq1}
c_0^2 \geq (c_{-1})(c_1).
\end{equation}
The coefficients in question are infinite sums, assumed to be convergent,
\begin{equation*}
\begin{split}
c_0=& \Sigma a_k b_{-k}\\
c_{-1}=& \Sigma a_k b_{-k-1}\\
c_1 = &\Sigma a_{k} b_{-k+1}
\end{split}
\end{equation*}

For showing that (\ref{eq1}) holds, the following general observation will be useful, applying to any situation when we need to compare two sums of non-negative real numbers indexed by the same index set, but term-by-term comparability for each index is not available. Recall that the sum of a countable family of non-negative real numbers is always a well-defined element of the extended real half-line $[0,\infty].$

\begin{observation}
Let $(p_i)$ and $(r_i)$ be finite or infinite families of non-negative real numbers indexed by the same set $I$, and let $\sigma$  be a bijection $I\rightarrow I$ (permutation of indices). If for each index $i$ the inequality $p_i + p_{\sigma(i)} \geq r_i + r_{\sigma(i)}$
holds, then $\Sigma p_i  \geq  \Sigma r_i$ even if $p_i \geq r_i $ fails for  some indices $i$.
\end{observation}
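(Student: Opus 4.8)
The plan is to sum the hypothesized inequalities over the entire index set $I$ and to exploit the fact that, for families of non-negative reals, unordered summation is monotone, additive, and invariant under bijective reindexing — all of which remain valid with values read in the extended half-line $[0,\infty]$. The whole point of pairing $p_i$ with $p_{\sigma(i)}$ is that the per-index hypothesis $p_i + p_{\sigma(i)} \geq r_i + r_{\sigma(i)}$ \emph{is} available for every $i$, so that after summation one may invoke ordinary term-by-term monotonicity of the sum, a comparison that the bare relation between $p_i$ and $r_i$ does not supply.

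First I would record the three structural properties of the sum $\Sigma_{i \in I} a_i := \sup\{\Sigma_{i \in F} a_i : F \subseteq I \text{ finite}\}$ of a non-negative family, each holding in $[0,\infty]$ as recalled above: (a) monotonicity, i.e. $a_i \geq b_i$ for all $i$ implies $\Sigma a_i \geq \Sigma b_i$; (b) additivity, $\Sigma_i (a_i + b_i) = \Sigma_i a_i + \Sigma_i b_i$; and (c) invariance under any bijection $\tau$ of $I$, namely $\Sigma_i a_{\tau(i)} = \Sigma_i a_i$, since $(a_{\tau(i)})$ is merely a relabeling of the same family.

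Applying (a) to the families $a_i = p_i + p_{\sigma(i)}$ and $b_i = r_i + r_{\sigma(i)}$ yields $\Sigma_i (p_i + p_{\sigma(i)}) \geq \Sigma_i (r_i + r_{\sigma(i)})$. Property (b) then splits each side into $\Sigma_i p_i + \Sigma_i p_{\sigma(i)}$ and $\Sigma_i r_i + \Sigma_i r_{\sigma(i)}$ respectively, and property (c) applied with $\tau = \sigma$ identifies each shifted sum with its original, collapsing the inequality to $2\,\Sigma_i p_i \geq 2\,\Sigma_i r_i$.

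The one step that requires genuine care — and the only real obstacle — is the final passage from $2\,\Sigma_i p_i \geq 2\,\Sigma_i r_i$ to $\Sigma_i p_i \geq \Sigma_i r_i$, since both sides may equal $\infty$, where naive cancellation of the factor $2$ is not automatic. I would dispose of this by cases: if $\Sigma_i p_i = \infty$ the conclusion holds trivially, whereas if $\Sigma_i p_i < \infty$ then $2\,\Sigma_i r_i \leq 2\,\Sigma_i p_i < \infty$ forces $\Sigma_i r_i$ to be finite, after which the factor $2$ cancels in the ordinary way. This establishes $\Sigma_i p_i \geq \Sigma_i r_i$ in every case, completing the argument.
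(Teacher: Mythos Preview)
Your argument is correct and carefully handles the only subtlety, namely the possibility that the sums equal $\infty$. The paper itself does not prove this Observation at all: it is stated as a self-evident general fact and then immediately applied, so your write-up supplies what the paper leaves implicit rather than diverging from any given proof.
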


Applying the Observation with $I=\mathbb{Z}^2=\{(k,n): k,n\in \mathbb{Z}\}$, $p_{k,n}=a_ka_nb_{-k}b_{-n}$ and $r_{k,n}=a_ka_nb_{-k-1}b_{-n+1}$, to show that $c_0^2 \geq (c_{-1})(c_1)$ we need to verify that \begin{equation}
\sum_{(k,n)\in \mathbb{Z}^2}p_{k,n}\geq \sum_{(k,n)\in \mathbb{Z}^2}r_{k,n}
\end{equation}
For this, we define the permutation $\sigma$ of the index set $\mathbb{Z}^2$ by $\sigma(k,n)=(n-1,k+1).$  For each indexing pair $(k,n)$, we claim that $$p_{k,n}+p_{\sigma(k,n)}\geq r_{k,n}+r_{\sigma(k,n)}$$
This inequality is equivalent to the non-negativity of
\begin{equation*}
\begin{split}
a_{k}a_{n}&b_{-k}b_{-n}+a_{n-1}a_{k+1}b_{-n+1}b_{-k-1}   
-a_{k}a_{n}b_{-k-1}b_{-n+1}-a_{n-1}a_{k+1}b_{-n}b_{-k}  \\
= &\, a_{k}a_{n}(b_{-k}b_{-n}-b_{-k-1}b_{-n+1})-a_{n-1}a_{k+1}(b_{-n}b_{-k}-b_{-n+1}b_{-k-1})\\
= & \,(a_{k}a_{n}-a_{n-1}a_{k+1})(b_{-k}b_{-n}-b_{-k-1}b_{-n+1})
\end{split}
\end{equation*}
Simple case analysis shows that the two parentheses above are of the same sign, based on the log-concavity of $(a_n)$ and $(b_n)$. This argument, amounting to a version of Menon's proof for 1-sided infinite sequences \cite{men}, confirms the following:
\begin{proposition}[see e.g. Karlin\cite{kar}, Theorem 8.2]\label{p5}
If every term of the convolution of log-concave sequences $(a_n)_{n\in \mathbb{Z}}$ and $(b_n)_{n\in \mathbb{Z}}$ is finite, then the convolution $(a_n)\ast (b_n)$ is also log-concave.
\end{proposition}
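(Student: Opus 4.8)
The plan is to build directly on the reduction already performed above. Since multiplying a Laurent series by a power $X^m$ shifts the coefficient sequence and preserves its log-concavity reversibly, log-concavity of $(a_n)\ast(b_n)$ is equivalent to the single inequality (\ref{eq1}), namely $c_0^2 \ge c_{-1}c_1$. The hypothesis that every term of the convolution is finite ensures that $c_{-1}$, $c_0$ and $c_1$ are genuine non-negative reals, so this inequality is meaningful and its verification will complete the proof.

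First I would express both sides as sums over the common index set $\mathbb{Z}^2$. Because all terms are non-negative, Tonelli's theorem permits the factorizations $c_0^2=\sum_{(k,n)\in\mathbb{Z}^2}a_ka_nb_{-k}b_{-n}=\sum p_{k,n}$ and $c_{-1}c_1=\sum_{(k,n)\in\mathbb{Z}^2}a_ka_nb_{-k-1}b_{-n+1}=\sum r_{k,n}$, with $p_{k,n}$ and $r_{k,n}$ as above. Comparing these two sums is exactly the setting of the Observation, which I would invoke with the bijection $\sigma(k,n)=(n-1,k+1)$; note that $\sigma$ is an involution fixing the index sum $k+n$. As the displayed computation shows, the pairwise requirement $p_{k,n}+p_{\sigma(k,n)}\ge r_{k,n}+r_{\sigma(k,n)}$ collapses to the single inequality $(a_ka_n-a_{n-1}a_{k+1})(b_{-k}b_{-n}-b_{-k-1}b_{-n+1})\ge 0$, so everything hinges on showing that the two parenthesised factors never have opposite signs.

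This sign analysis is the only real content, and the main obstacle. My approach is to set $u=n-k$ and notice that each factor compares two index pairs sharing a common sum but having spreads $|u|$ and $|u-2|$: the pair $(k,n)$ against $(k+1,n-1)$ in the $a$-factor, and $(-k,-n)$ against $(-k-1,-n+1)$ in the $b$-factor. By the unimodality of the skew self-product (Fact \ref{f3}), among index pairs with a prescribed sum the product of the corresponding terms is the larger for the pair of smaller spread; hence the sign of \emph{each} factor is determined solely by the comparison of $|u|$ with $|u-2|$, i.e. by whether $u\le 1$. Since this threshold is identical for both factors, they are simultaneously non-negative (for $u\le 1$) or simultaneously non-positive (for $u\ge 1$), both vanishing when $u=1$. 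Their product is therefore non-negative for every $(k,n)$, the Observation delivers (\ref{eq1}), and the log-concavity of the convolution follows.
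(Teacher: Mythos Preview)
Your argument is correct and follows the paper's proof essentially verbatim: the same reduction to $c_0^2\ge c_{-1}c_1$, the same double-sum representation with the permutation $\sigma(k,n)=(n-1,k+1)$, and the same factorisation of the pairwise difference. The only difference is that where the paper writes ``simple case analysis shows that the two parentheses above are of the same sign,'' you have spelled this out via the parameter $u=n-k$ and the spread comparison, which is a welcome clarification but not a different method.
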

Note that the dual property of log-convexity is not preserved under convolution (see Liu and Wang \cite{liu}).

\section{Pascal type triangular arrays}\label{ptta}
Several broad generalizations of Pascal's triangle were developed, in particular as an approach to log-concavity (or unimodality) of (finite) combinatorial sequences  (e.g. Stirling numbers \cite{mez}, $f$-vectors of polytopes). Kurtz's construction involves row to next row recursion using weighted summation of the two entries above a given entry \cite{kur}, a simpler instance of which was used to study $f$-vectors of cyclic polytopes \cite{maj}. Brenti's more recent weighted summation formula is more complex than Kurtz's, in that it involves more than a single preceding row, but it also has different assumptions about the weights \cite{bre1}. The triangular construction may also proceed from fixing the (infinite) right side of the triangle not necessarily consisting of 1's. This approach is implicit e.g.  in Hoggar \cite{hog}, the construction being in fact subsumed by the earlier {convolution triangle} construction of Hoggatt and Bicknell \cite{hog1}. In this section we establish the log-concavity of the rows of a large class of convolution triangles, a special case being the log-concavity of the rows of the Delannoy triangle.

A \textit{triangular array} is a doubly indexed family of numbers $T(n,k)_{n,k\in \mathbb{Z}}$, where $T(n,k)$ is $0$ unless $0\leq k \leq n$.
The $n^{th}$ \textit{row} of the array is the sequence $(T(n,k))_{k\in \mathbb{Z}}$. Obviously this row has at most $n+1$ non-null terms.
  For any two sequences $a=(a_n)$ and $q=(q_n)$ whose terms are zeros for $n<0$, the \textit{convolution array} is the triangular array given by $T(n,k)=(a\ast q^{\ast (n-k)})_{k}$ for $0\leq k\leq n$, $a=(a_n)$ being called \textit{initial side sequence} and $q=(q_n)$ the \textit{convolution multiplier sequence}.

\begin{lemma}\label{l1}Let $a=(a_n)$ and $q=(q_n)$ be  non-negative log-concave sequences whose terms are zeros for $n<0$. For $k\geq 1$ define the sequences 
\begin{equation*}
\begin{split}
b=& a\ast q^{\ast (k-1)}\\
c=& a\ast q^{\ast k}\\
d = & a\ast q^{\ast (k+1)}\\
\end{split}
\end{equation*} 
For all $n\geq 1$ \textit{the following inequality holds:} $$c_{n}^{2}\geq d_{n-1}\cdot b_{n+1}$$
\end{lemma}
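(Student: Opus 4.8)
The plan is to reduce the inequality to a single transparent application of the log-concavity of $b$, exploiting that every sequence in sight vanishes at negative indices. First I would record the structural identities $c=b\ast q$ and $d=b\ast q^{\ast 2}$, both immediate from the definitions since $c=a\ast q^{\ast k}=(a\ast q^{\ast(k-1)})\ast q$ and $d=(a\ast q^{\ast(k-1)})\ast q\ast q$. I would then note that $b=a\ast q^{\ast(k-1)}$ is itself log-concave: being a convolution of finitely many non-negative log-concave sequences whose terms vanish for negative indices, each of its terms is a finite sum, so Proposition \ref{p5} applies (iterated if $k\ge 2$; for $k=1$ one has simply $b=a$). Thus I may discard the particular form of $b$ and argue with arbitrary non-negative log-concave sequences $b$ and $q$ vanishing at negative indices.

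Next I would expand both sides of $c_n^2\ge b_{n+1}d_{n-1}$ as double sums over $\mathbb Z^2$ carrying the common non-negative weight $q_jq_{j'}$. From $c_n=\sum_j b_{n-j}q_j$ one gets
$$c_n^2=\sum_{(j,j')\in\mathbb Z^2} q_jq_{j'}\,b_{n-j}b_{n-j'},$$
and from $d_{n-1}=(b\ast q\ast q)_{n-1}=\sum_{j,j'}b_{n-1-j-j'}\,q_jq_{j'}$ one gets
$$b_{n+1}d_{n-1}=\sum_{(j,j')\in\mathbb Z^2}q_jq_{j'}\,b_{n+1}b_{n-1-j-j'}.$$
Subtracting, the claim becomes $\sum_{(j,j')}q_jq_{j'}\bigl(b_{n-j}b_{n-j'}-b_{n+1}b_{n-1-j-j'}\bigr)\ge 0$, all rearrangements being legitimate as the families are non-negative and summable.

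The heart of the argument is that this holds \emph{term by term}, so that the permutation/Observation device needed for Proposition \ref{p5} can be bypassed. The weight $q_jq_{j'}$ vanishes unless $j\ge 0$ and $j'\ge 0$, precisely because $q$ vanishes at negative indices. For such $j,j'$ the two index pairs $\{n-j,\,n-j'\}$ and $\{n+1,\,n-1-j-j'\}$ have the same sum $2n-j-j'$, and the first is nested inside the second: from $j,j'\ge 0$ one has $n-j,\,n-j'\le n+1$ and $n-j,\,n-j'\ge n-1-j-j'$. Applying the defining log-concavity inequality to $b$ with base index $n-1-j-j'$ and the two positive integers $j+1$ and $j'+1$ gives directly $b_{n-1-j-j'}\,b_{n+1}\le b_{n-j'}\,b_{n-j}$, so every summand is non-negative and the inequality follows.

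The step I expect to be the only real obstacle is recognizing that one-sidedness is doing all the work: it confines the summation to $j,j'\ge 0$, which is exactly what forces the two shift parameters $j+1,\,j'+1$ to be positive and the pair $\{n-j,n-j'\}$ to be the more central one. Without that restriction individual summands can be negative (e.g. $j=-2,\,j'=0$ yields $b_nb_{n+2}-b_{n+1}^2\le 0$), and one would be driven back to a global rearrangement argument; the content of the lemma is that the one-sided hypothesis removes this difficulty altogether. A minor point to verify is that log-concavity ``without internal zeros'' is inherited by $b$ from Proposition \ref{p5}, so that the defining inequality is genuinely available for all the shifts in play, including those straddling the boundary where $b$ becomes zero.
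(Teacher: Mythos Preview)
Your proof is correct and follows essentially the same route as the paper: both express $c_n^2$ and $b_{n+1}d_{n-1}$ as double sums weighted by $q_jq_{j'}$ and compare term by term via the log-concavity of $b$ (your indexing $(j,j')$ corresponds to the paper's $(j,\,n-i-j-1)$). Your version is somewhat more explicit in that you state and justify the log-concavity of $b$ via Proposition~\ref{p5}, which the paper uses but leaves unremarked, and you treat the boundary terms ($j+j'\ge n$) uniformly rather than discarding them in an intermediate inequality as the paper does.
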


\begin{proof}
Now 
\begin{equation}\label{sn2}
c_{n}^{2}=(b_{0}q_n+\cdots+b_{n}q_0)(b_{0}q_n+\cdots+b_{n}q_0)
\end{equation}
and
\begin{equation}\label{Sn}
d_{n-1}\cdot b_{n+1}=\bigg(\sum_{i=0}^{n-1}\sum_{j=0}^{n-i-1}b_{i}q_{j}q_{n-i-j-1}\bigg)\cdot b_{n+1}
\end{equation}
From (\ref{sn2}) we have

\begin{equation}\label{snn}
c_{n}^{2}\geq \sum _{i=0}^{n-1}\sum _{j=0}^{n-i-1}b_{i+j+1}b_{n-j}q_{n-i-j-1}q_{j}
\end{equation}
while because of the log-concavity of $b$, the product  $b_{i+j+1}b_{n-j}$ is always greater than or equal to $b_{i}b_{n+1}$ which proves the
Lemma due to the expression (\ref{Sn}). 
\end{proof}
Based on the Lemma   the following can be shown:
\begin{proposition}\label{thr}
For any two given log-concave sequences $a=(a_n)$ and $q=(q_n)$ whose terms are zeros for $n<0$, taken as initial side and convolution multiplier sequence respectively,  every row of the corresponding convolution array is log-concave. (The $n^{th}$ element in the $k^{th}$ row of the convolution array being given by $T(n,k)=(a\ast q^{\ast (n-1)})_k$ for $0\leq k \leq n$, $T(n,k)=0$ for $k\leq 0$ and $n\leq k$.)
\end{proposition}
As an application, consider the triangular array which for any fixed real $\beta,\gamma,\delta\geq 0$ is given by $T(0,0)=1$ and 
\begin{equation}\label{e6}
T(n,k)=\beta T(n-1,k-1)+\gamma T(n-1,k)+\delta T(n-2,k-1)
\end{equation}
In other words, each entry in the array other than $T(0,0)$ is calculated as a weighted sum of the two entries above and the third entry which is above the latter two. This is in fact a weighted generalization of the Delannoy triangle (counting weighted lattice paths as in Fray and Roselle \cite{frr} and Hetyei \cite{het}, but being less general than the generalized Delannoy numbers in Dziemia\'nczuk \cite{dzi}, differing also from Loehr and Savage \cite{los}).   The log-concavity of the rows of this generalized Delannoy triangle can be obtained as a corollary of Proposition \ref{thr} (or, in a different framework, derived from  Theorem 4.3 of Brenti    \cite{bre1}). More concretely, Proposition \ref{thr}  applies to the case where ${a}=(\ldots,0,0,1,\beta,\beta^2,\ldots)$ and ${q}$ itself is given as the convolution $${q}=(\ldots,0,0,1,\beta,\beta^2,\ldots)\ast(\ldots,0,0,\gamma,\delta,0,0,\ldots)$$
This shows that the following holds:
\begin{corollary}[derivable also from Brenti \cite{bre1}]
For any fixed real numbers $\beta,\gamma,\delta\geq 0$ the rows of the weighted Delannoy triangular array given by $T(0,0)=1$ and the recursion \emph{(\ref{e6})}  are log-concave. 
\end{corollary}
Obviously the $n^{th}$ row of this generalized Delannoy triangle is the sequence of coefficients of the monomials of total degree $n$ in the power series $$\sum_{m=0}^{\infty}(\beta x+\gamma y+\delta xy)^m$$
Pascal's triangle corresponds to the case $\beta=\gamma=1, \,\delta=0. $

A statement stronger than the above corollary was proved by Mu and Zheng in \cite{muz}, relying on the assumtion that $\beta=1$.
\section{Comparison of methods to prove log-concavity}
To establish the log-concavity of the rows of a triangular array the methods of Kurtz and Brenti assume certain weighted recursion relationships that yield each row from one, two or three previous rows, with different assumptions also about the weight sequences. On the other hand, Proposition \ref{thr} above assumes that the triangular array is a convolution array, generated from log-concave initial side and convolution multiplier sequences. 

Kurtz's method works to establish also the log-concavity of rows of certain convolution triangles, which was used in particular to prove log-concavity of $f$-vectors of ordinary polytopes (see \cite{maj}, where convolution triangles have eventually decreasing, not necessarily log-concave initial side sequences).
Proposition \ref{thr} works also to establish log-concavity of some triangular arrays where a row does not fully determine the next row (classical and generalized Delannoy triangles). 

The two approaches have a non-trivial overlap. For example, while an instance of Proposition \ref{thr} with convolution multiplier $q=(\ldots,0,0,1,1,1\ldots)$, as appearing implicitly in Hoggar \cite{hog}, provides a simple  approach to verify the log-concavity of $f$-vectors of cyclic polytopes, the first proof of this log-concavity property for the larger class of ordinary polytopes was based on an argument amounting to an instance of Kurtz's method \cite{maj}.

\end{document}